\newtheorem{thm}{Theorem}[section]
\newtheorem{cor}[thm]{Corollary}
\newtheorem{lem}[thm]{Lemma}
\newcommand{\St}{\tilde S}
\newcommand{\Gt}{\tilde G}
\newcommand{\Ht}{\tilde H}
\newcommand{\Sch}{Sch\"utzenberger}
\newcommand{\rn}{reverse-nonnegative}
\newcommand{\rp}{reverse-positive}
\begin{document}
\title{Lattice paths and the Geode}
\author{Ira M. Gessel$^*$}
\address{Department of Mathematics\\
   Brandeis University\\
   Waltham, MA 02453}
\email{gessel@brandeis.edu}
\date{July 17, 2025}
\begin{abstract}

Let $t_1,t_2,\dots$ be variables, and  let $S$ be the  formal power series in the variables $t_1, t_2,\dots$ satisfying 
\begin{math}
S=1+\sum_{n=1}^\infty t_n S^n. 
\end{math}
Let $S_1 =\sum_{n=1}^\infty t_n$. Wildberger and Rubine recently showed that  there is a formal power series $G$ in the $t_i$, which they called the ``Geode," satisfying 
$S=1+GS_1$.
In this paper we discuss some of the properties of the Geode and of the related series $H=G/S$, which satisfies $S=1/(1-HS_1)$. We show that 
\begin{equation*}
G=\biggl(1-\sum_{n=1}^\infty t_n (1+S+S^2+\cdots+S^{n-1})\biggr)^{-1},
\end{equation*}
and
\begin{equation*}
H=\biggl( 1-\sum_{n=2}^\infty t_n (S+S^2+\cdots+S^{n-1})\biggr)^{-1},
\end{equation*}
and we give combinatorial interpretations of $G$ and $H$ in terms of lattice paths.
\end{abstract}

\maketitle

\thispagestyle{empty}
\section{Introduction}

Let $t_1, t_2,\dots$ be variables, and  let $S$ be the unique formal power series in these variables satisfying 
\begin{equation}
\label{e-S0}
S=1+\sum_{n=1}^\infty t_n S^n. 
\end{equation}
Let $S_1 =\sum_{n=1}^\infty t_n$. Wildberger and Rubine \cite[Theorem 12]{wr} showed that $S-1$ is divisible by $S_1$; i.e., there is a formal power series $G$ in the $t_i$, which they called the ``Geode," satisfying 
$S=1+GS_1$. (See also Rubine \cite{rubine}.)

In this paper we discuss some of the properties of the Geode. In Section \ref{s-formulas}, we show that 
\begin{equation*}
G=\biggl(1-\sum_{n=1}^\infty t_n (1+S+S^2+\cdots+S^{n-1})\biggr)^{-1},\label{e-G20}
\end{equation*}
which gives another proof that $G$ is a power series in $t_1,t_2,\dots$ with nonnegative integer coefficients.%
\footnote{Wildberger and Rubine's inductive proof may be sketched as follows: Let $S_n$ be the sum of all terms in $S$ of degree $n$, where the degree of a monomial $t_1^{i_1}\cdots t_{k}^{i_k}$ is $i_1+\cdots + i_k$. Then $S_1=t_1+t_2+\cdots$, in agreement with our previous definition of $S_1$, and for $n\ge2$, equating terms of degree $n$ on both sides of \eqref{e-S0} shows that $S_n$ is a sum of products, each of which is divisible by some $S_i$ with $1\le i <n$.
}
 We also introduce a related power series $H$, which satisfies $G=SH$ and 
$S=1/(1-H S_1)$, and we show that
\begin{equation*}
\label{e-H20}
H=\biggl( 1-\sum_{n=2}^\infty t_n (S+S^2+\cdots+S^{n-1})\biggr)^{-1},
\end{equation*}
which implies that $H$ has nonnegative coefficients.

In Section \ref{s-paths} we give combinatorial interpretations to the series $S$, $G$, and $H$, and to the formulas relating them, in terms of lattice paths. (Wildberger and Rubine worked with polygon dissections rather than lattice paths.) We represent  the paths by sequences of integers greater than or equal to $-1$, where the weight of $n\ge0$ is $t_{n+1}$ and the weight of $-1$ is $1$. Then $S$ counts sequences with sum 0 and every partial sum nonnegative (excursions), $G$ counts sequences with every partial sum nonnegative, and $H$ counts sequences with every nonempty partial sum positive. 

In Section \ref{s-cat} we look at what happens when we set $t_n=0$ for $n>2$. Here \eqref{e-S0} becomes a quadratic equation which can be solved explicitly, and we obtain generating functins for Catalan, Motzkin, Riordan, and Schr\"oder numbers.

In Section \ref{s-app} we gave an indirect alternative proof of the lattice path interpretation of the Geode, using one of the standard techniques of lattice path enumeration.
 
\section{Formulas for the Geode}
\label{s-formulas}

We first show the existence of a power series $G$ in the variables $t_1,t_2, t_3, \dots$ satisfying $S = 1+GS_1$. 
(Wildberger and Rubine did not include the variable $t_1$ as it is not so relevant to polygon dissections.)

Since the ring of formal power series in $t_1,t_2,\dots$ (over the integers or rationals, for example) is an integral domain, we may work in its field of fractions. Thus we may always divide by a nonzero power series, even though the quotient is not guaranteed to be a power series.
We  define $G$ to be $(S-1)/S_1$, so $S=1+S_1G$ and we define $H$ to be $G/S$. 

\begin{thm}
\label{t-1}
We have
\begin{align}
S &= \biggl(1-\sum_{n=1}^{\infty} t_nS^{n-1}\biggr)^{-1}\label{e-S1}\\
  &= \frac{1}{1-H S_1}, \label{e-SH}
\end{align}
\begin{equation}
G=\biggl(1-\sum_{n=1}^\infty t_n (1+S+S^2+\cdots+S^{n-1})\biggr)^{-1},\label{e-G2}
\end{equation}
and 
\begin{equation}
\label{e-H2}
H=\biggl( 1-\sum_{n=2}^\infty t_n (S+S^2+\cdots+S^{n-1})\biggr)^{-1}.
\end{equation}
\end{thm}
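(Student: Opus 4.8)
\emph{Proof proposal.} The plan is to derive all four identities by elementary algebra in the field of fractions of the power series ring, starting from the defining relation \eqref{e-S0} and the definitions $G=(S-1)/S_1$ and $H=G/S$. Equation \eqref{e-S1} is immediate: dividing \eqref{e-S0} by $S$ gives $1=S^{-1}+\sum_{n=1}^{\infty}t_nS^{n-1}$, hence $S^{-1}=1-\sum_{n=1}^{\infty}t_nS^{n-1}$. For \eqref{e-SH} I would observe that $HS_1=GS_1/S=(S-1)/S$, since $GS_1=S-1$ by the definition of $G$; therefore $1-HS_1=S^{-1}$, which is \eqref{e-SH}.

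The one genuinely substantive step is \eqref{e-G2}, and the idea is to use the factorization $S^n-1=(S-1)(1+S+\cdots+S^{n-1})$. Rewriting $S-1=\sum_{n=1}^{\infty}t_nS^n$ as $\sum_{n=1}^{\infty}t_n(S^n-1)+\sum_{n=1}^{\infty}t_n$ and applying this factorization gives $S-1=(S-1)\sum_{n=1}^{\infty}t_n(1+S+\cdots+S^{n-1})+S_1$. Dividing through by $S_1$ and using $G=(S-1)/S_1$ turns this into $G=G\sum_{n=1}^{\infty}t_n(1+S+\cdots+S^{n-1})+1$, and solving for $G$ yields \eqref{e-G2}.

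Finally, \eqref{e-H2} should drop out of \eqref{e-G2} after multiplying by $S$: since $H^{-1}=SG^{-1}=S\bigl(1-\sum_{n=1}^{\infty}t_n(1+S+\cdots+S^{n-1})\bigr)$ and $S(1+S+\cdots+S^{n-1})=S+S^2+\cdots+S^n$, we get $H^{-1}=S-\sum_{n=1}^{\infty}t_nS^n-\sum_{n=2}^{\infty}t_n(S+\cdots+S^{n-1})$; using $\sum_{n=1}^{\infty}t_nS^n=S-1$ from \eqref{e-S0}, this collapses to $H^{-1}=1-\sum_{n=2}^{\infty}t_n(S+\cdots+S^{n-1})$. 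I do not anticipate a real obstacle here; the only point requiring care is the index bookkeeping that separates the $S^n$ term from the (possibly empty) partial sum $S+\cdots+S^{n-1}$, which is why that last sum begins at $n=2$. If desired, one can also add a one-line verification that each right-hand side has constant term $1$, so that the indicated inverses are bona fide power series — this is what makes the nonnegativity corollary meaningful, but it is not needed for the identities themselves.
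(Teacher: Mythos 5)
Your proposal is correct and follows essentially the same route as the paper: elementary algebra from \eqref{e-S0} and the definitions $G=(S-1)/S_1$, $H=G/S$, with the key step for \eqref{e-G2} being the factorization $S^n-1=(S-1)(1+S+\cdots+S^{n-1})$. The only cosmetic difference is in \eqref{e-H2}, where the paper notes that the two right-hand sides differ term-by-term by $t_n$, so it suffices to check $H^{-1}-G^{-1}=(S-1)/G=S_1$, whereas you expand $H^{-1}=SG^{-1}$ directly and resimplify via \eqref{e-S0}; both computations are valid and of comparable length.
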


\begin{proof}
Equation \eqref{e-S1} follows easily from \eqref{e-S0}. For \eqref{e-SH}, we have
\begin{equation*}
1-HS_1 = 1-G S_1/S = 1-(S-1)/S =1/S.
\end{equation*}

From \eqref{e-S0}
we have
$S =1+ S_1 + \sum_{n=1}^\infty t_n (S^n-1)$, so
\begin{align}
G&=(S-1)/S_1=
 1+\sum_{n=1}^\infty t_n \left(\frac{S^n-1}{S_1}\right)\notag\\
&=1+\sum_{n=1}^\infty t_n \frac{S-1}{S_1}(1+S+S^2+\cdots+S^{n-1})\notag\\
&= 1+G\sum_{n=1}^\infty t_n (1+S+S^2+\cdots+S^{n-1}).\label{e-G1}
\end{align}
Then \eqref{e-G2} follows directly from \eqref{e-G1}..
Given \eqref{e-G2}, to prove \eqref{e-H2} it is sufficient to show that 
$H^{-1}-G^{-1}=S_1$. From the definition of $H$,
 we have  $H^{-1}=S/G$, so $H^{-1}-G^{-1}=(S-1)/G = S_1$.
 \end{proof}
 
Equation \eqref{e-G2} leads to a short proof of one of the conjectures of Wildberger and Rubine \cite[Section 11]{wr}.
Their conjecture is that 
\begin{equation*}
G(0,-f,f,\dots, -f,f) = \sum_{n=0}^\infty k^n f^n,
\end{equation*}
where there are $2k+1$ parameters in $G(0,-f,f,\dots, -f,f)$ and $G(u_1,\dots u_m)$ is the result of setting $t_i= u_i$ for $1\le i\le m$ and $t_i=0$ for $i>m$ in $G$. (Our notation differs slightly from Wildberger and Rubine's because we have including the variable $t_1$ and they have not.) Note that with this substitution $S_1=0$ and thus the formula $S=1+GS_1$ gives $S=1$, but is no help in evaluating $G$.

We have the following corollary of Theorem \ref{t-1}.

\begin{cor}
Suppose that $u_1+u_2+\cdots + u_m = 0.$ Then
\begin{equation}
\label{e-Gu}
G(u_1,\dots, u_m) = \biggl(1-\sum_{n=1}^m nu_n\biggr)^{-1},
\end{equation}
assuming that the right side of \eqref{e-Gu} exists as a formal power series, and $H(u_1,\dots, u_m) = G(u_1, \dots, u_m)$.
\end{cor}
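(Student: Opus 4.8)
The plan is to obtain everything by substituting $t_i = u_i$ for $1\le i\le m$ and $t_i=0$ for $i>m$ into the identities of Theorem \ref{t-1}. Write $\phi$ for this substitution, and for a power series $P$ in the $t_i$ write $P(u_1,\dots,u_m)=\phi(P)$. Since $G$ and $H$ are genuine power series in the $t_i$ by Theorem \ref{t-1}, the quantities $G(u_1,\dots,u_m)$ and $H(u_1,\dots,u_m)$ are well defined, and $\phi$ is a (continuous) ring homomorphism, so it sends inverses to inverses.

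First I would show that $S(u_1,\dots,u_m)=1$. This is immediate from $S=1+S_1G$: applying $\phi$ gives $S(u_1,\dots,u_m)=1+S_1(u_1,\dots,u_m)\,G(u_1,\dots,u_m)$, and $S_1(u_1,\dots,u_m)=u_1+\cdots+u_m=0$ by hypothesis, so the second term vanishes regardless of the value of $G(u_1,\dots,u_m)$.

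Next I would apply $\phi$ to \eqref{e-G2}. Since $S(u_1,\dots,u_m)=1$, every factor $1+S+S^2+\cdots+S^{n-1}$ maps to $n$, and (because $u_n=0$ for $n>m$) the sum collapses to $\sum_{n=1}^m nu_n$, giving \eqref{e-Gu}; the hypothesis that the right side of \eqref{e-Gu} exists as a formal power series is exactly what guarantees that $1-\sum_{n=1}^m nu_n$ is invertible in the target ring, so that $\phi$ of the inverse is the inverse of $\phi$. For the identity $H(u_1,\dots,u_m)=G(u_1,\dots,u_m)$, I would use the relation $H^{-1}-G^{-1}=S_1$ established in the proof of Theorem \ref{t-1}: applying $\phi$ gives $H(u_1,\dots,u_m)^{-1}-G(u_1,\dots,u_m)^{-1}=S_1(u_1,\dots,u_m)=0$, whence the two series coincide. (Alternatively, substituting into \eqref{e-H2} gives $H(u_1,\dots,u_m)=\bigl(1-\sum_{n=2}^m(n-1)u_n\bigr)^{-1}$, and $\sum_{n=2}^m(n-1)u_n=\sum_{n=1}^m nu_n$ because $\sum_{n=1}^m u_n=0$.)

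There is no genuinely hard step: the corollary is a direct specialization of identities already in hand. The only point requiring care is the meaning of the substitution $t_i=u_i$ together with the invertibility of $1-\sum_{n=1}^m nu_n$, which is precisely the content of the stated hypothesis. In the application to the Wildberger--Rubine conjecture one takes $u_1=0$ and $u_{2j}=-f$, $u_{2j+1}=f$ for $1\le j\le k$; then $\sum_{n=1}^{2k+1}u_n=0$ and $\sum_{n=1}^{2k+1}nu_n=\sum_{j=1}^k\bigl(2j(-f)+(2j+1)f\bigr)=kf$, so $G(0,-f,f,\dots,-f,f)=(1-kf)^{-1}=\sum_{n=0}^\infty k^nf^n$.
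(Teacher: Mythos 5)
Your proposal is correct and follows essentially the same route as the paper: deduce $S(u_1,\dots,u_m)=1$ from $S=1+GS_1$, then specialize \eqref{e-G2}. The only cosmetic difference is in proving $H=G$, where the paper uses $H=G/S$ with $S=1$ rather than your $H^{-1}-G^{-1}=S_1$; both are immediate.
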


\begin{proof}
If $u_1+\cdots+u_m=0$ then $S_1(u_1,\dots, u_m) = 0$, so since $S = 1+GS_1$, we have $S(u_1,\dots, u_m) = 1$. Then \eqref{e-Gu}  follows from \eqref{e-G2}. 
Since $H=G/S$ and $S(u_1,\dots, u_m)=1$, we have $H(u_1,\dots, u_m) = G(u_1,\dots, u_m)$.
\end{proof}

For Wildberger and Rubine's conjecture, we take $u_1=0$ and $u_n=(-1)^{n-1} f$ for $2\le n\le 2k+1$.
Then 
\begin{equation*}
\sum_{n=1}^{2k+1} nu_n = \sum_{n=2}^{2k+1} n(-1)^{n-1}f = kf,
\end{equation*}
so $G(0,-f,f,\dots, -f,f) = (1-kf)^{-1}$. 

A different proof of this conjecture was given by Amdeberhan and Zeilberger \cite[Theorem 3.1]{az}.

\section{Lattice paths}
\label{s-paths}

In this section, we interpret the formulas of Theorem \ref{t-1} in terms of lattice paths. Our basic tool is \emph{factorization} of paths. For some other applications of factorization of lattice paths, see Labelle and Yeh \cite{ly}, Lemus Vidales \cite{cristobal}
Gessel \cite{imgfact, imglag}, and Merlini et al. \cite{mrsv}.
\subsection{Paths and factorization}

We  define a path formally to be a finite sequence of integers. We represent $(s_1,s_2,\dots, s_k)$ geometrically as the path in the plane with vertices $(0,s_1), (1,s_1+s_2), \dots, (k, s_1+s_2+\cdots +s_k)$. Thus, for example, the path $(1,1,-3,2)$ is represented geometrically as in Figure \ref{f-geom1}.
\begin{figure}[h]
\begin{tikzpicture}
\centering
\draw[help lines] (0,0) grid (4,3);
\draw[thick] (0,1) -- (1,2) -- (2,3) -- (3,0) -- (4,2);
\end{tikzpicture}
\caption{Geometric depiction of the path $(1,1,-3,2)$.}
\label{f-geom1}
\end{figure}
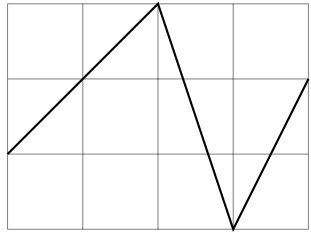

The geometric representations of paths by default start at the origin (so our figures do not show coordinate axes). The entries $s_i$ in a path $(s_1,s_2,\dots, s_k)$ are called the \emph{steps} of the path. 
We will often refer to paths geometrically rather than using the formal definition; for example, if we say that a path $(s_1, \dots, s_k)$ ends on the $x$-axis, we mean that $s_1+\cdots + s_k=0$.

If $P=(p_1,\dots, p_m)$ and $Q=(q_1,\dots, q_n)$ are paths, then their \emph{product} $PQ$ is the path $(p_1,\dots, p_m, q_1, \dots, q_n)$. Geometrically, $PQ$ is obtained by translating $Q$ to start at the endpoint of $P$.

The \emph{reversal} of a path $(s_1,\dots, s_k)$ is the path $(-s_k, \dots, -s_1)$. Geometrically, reversing a path is reflecting it in a vertical axis.
We call a path \emph{nonnegative} if it never goes below the $x$-axis; i.e.,  every partial sum is nonnegative. (Nonnegative paths are sometimes called \emph{meanders}.) We call a path \emph{positive} if it stays above the $x$ axis; i.e., every nonempty partial sum is positive. Note that the empty path is a positive path.  We call a path \emph{\rn} if it is the reversal of a nonnegative path and \emph{\rp} if it is the reversal of a positive path. (See Figure \ref{f-rev}.) Then \rn\ paths are paths whose endpoints are lowest points, and  \rp\ paths are paths in which the endpoint is strictly lower than every other point.

\begin{figure}[h]
\begin{tikzpicture}
\draw[help lines] (0,0) grid (4,2);
\draw[thick] (0,0) -- (1,1) -- (2,2) -- (3,0) -- (4,1);
\end{tikzpicture}
\hskip 30pt
\begin{tikzpicture}
\draw[help lines] (0,0) grid (4,2);
\draw[thick] (0,1) -- (1,0) -- (2,2) -- (3,1) -- (4,0);
\end{tikzpicture}
\caption{The nonnegative path $(1,1,-2,1)$ and its reversal $(-1,2,-1,-1)$.}
\label{f-rev}
\end{figure}
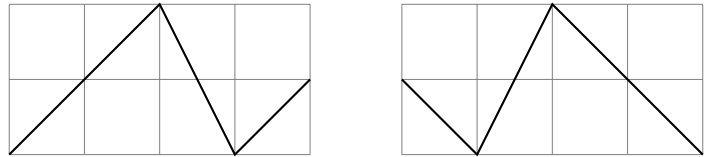

The \emph{height} of point $(m,n)$ is $n$, so the height of the endpoint of the path $(s_1,\dots, s_k)$ is $s_1+\cdots + s_k$.
An \emph{excursion} is a path (possibly empty) that ends at height~0 and never goes below height 0. (See Figure \ref{f-exc}.)) Thus an excursion is a path that is both nonnegative and \rn.
\begin{figure}[h]
\begin{tikzpicture}
\centering
\draw[help lines] (0,0) grid (5,2);
\draw[thick] (0,0) -- (1,1) -- (2,0) -- (3,2) -- (4,1) -- (5,0);
\end{tikzpicture}
\caption{An excursion.}
\label{f-exc}
\end{figure}
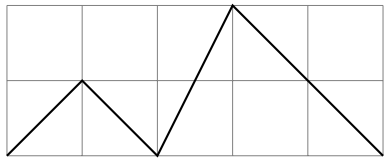

From here on, we assume that all steps are in the set $\{-1,0,1,2,\dots\}$. We call nonnegative steps \emph{up steps} (so $0$ is considered an up step) and we call $-1$ the \emph{down step}. It will be convenient in representing factorizations of paths by formulas to denote the up step $n$ by $U_n$ and the down step $-1$ by $D$. The restriction on the step set implies that \rn\ and \rp\ paths have  very simple decompositions, but nonnegative and positive paths, which are closely related to the Geode, are more complicated.

Next, we describe the decompositions for \rn\ and \rp\ paths that are helpful in deriving the combinatorial interpretations for $S$, $G$ and $H$.
\begin{lem} 
\label{l-1}
Let $P$ be a \rn\ path that ends at height $-n$. 
Then $P$ can be factored uniquely as $E_1 D E_2 D \cdots DE_{n+1}$ where each $E_i$ is an excursion and $D$ is the down step $-1$.
\end{lem}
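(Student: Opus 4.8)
The plan is to induct on $n$, the depth of the endpoint below the $x$-axis, with the base case $n=0$ being the statement that a \rn\ path ending at height $0$ is exactly an excursion, which is the definition. For the inductive step, suppose $P$ is \rn\ and ends at height $-n$ with $n\ge1$. The key observation is to locate a canonical down step in $P$: since $P$ is \rn, its endpoint is a lowest point, so the height $-n$ is attained by $P$, and the height $-(n-1)$ is attained as well (the path starts at height $0\ge -(n-1)$ and reaches $-n<-(n-1)$, so by the intermediate-value property of integer paths with steps $\ge -1$ — a path with minimum step $-1$ cannot jump \emph{downward} past a level — it must pass through height $-(n-1)$).

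First I would consider the \emph{last} time $P$ is at height $-(n-1)$; call the prefix up to that point $P'$ and the remaining suffix $P''$, so $P = P' P''$. The step of $P$ immediately following that last visit to height $-(n-1)$ must be a down step $D$ (it cannot be an up step, since $0$ and positive steps would keep the path at height $\ge -(n-1)$, contradicting lastness, using again that the only way to leave level $-(n-1)$ downward is the step $-1$, which lands exactly at $-n$). Write $P'' = D P'''$. Then $P'$ is a \rn\ path ending at height $-(n-1)$ — it ends at a lowest point of itself because $P$ never goes below $-(n-1)$ on the prefix $P'$ and we took the last visit — and $P'''$ is a path from height $-n$ to height $-n$ that never goes below $-n$ (since $P$ never does and $P'''$ occurs after the last visit to $-(n-1)$, so in fact $P'''$ stays at height $\ge -n$ after shifting), hence $P'''$ is an excursion. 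By the inductive hypothesis $P' = E_1 D E_2 D \cdots D E_n$ with each $E_i$ an excursion, and setting $E_{n+1} = P'''$ gives the desired factorization $P = E_1 D E_2 D \cdots D E_{n+1}$.

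For uniqueness, I would argue that the factorization point is forced: in any factorization $E_1 D E_2 D \cdots D E_{n+1}$, the final block $E_{n+1}$ is an excursion relative to the terminal height $-n$, and the preceding $D$ is the last step that brings the path from height $-(n-1)$ down to $-n$ and is never revisited at level $-(n-1)$ afterward; thus that $D$ is exactly the last step of $P$ issuing from the last visit to height $-(n-1)$, which pins down $P''$ and hence $P'$, and uniqueness for $P'$ follows by induction. I expect the main obstacle to be stating cleanly the "no downward jumps past a level" property and using it in two places (to show height $-(n-1)$ is attained, and to show the step leaving its last visit is exactly $D$); once that is isolated as a one-line remark about step sets contained in $\{-1,0,1,2,\dots\}$, the rest is a routine induction. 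A secondary point to handle carefully is the degenerate situation where $E_i$ is the empty excursion, e.g. when $P$ starts with $D$ or has consecutive down steps at the chosen levels — the argument above accommodates this since the empty path is a valid excursion.
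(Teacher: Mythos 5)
There is a genuine gap in your inductive step: the cut point you choose is wrong. You cut $P$ at the \emph{last} visit to height $-(n-1)$ and justify that the resulting prefix $P'$ is \rn\ by asserting that ``$P$ never goes below $-(n-1)$ on the prefix $P'$.'' That assertion is false: $P$ is only guaranteed never to go below $-n$, and it may dip to $-n$ and climb back to $-(n-1)$ before its last visit to $-(n-1)$. Concretely, take $n=2$ and $P=(-1,-1,1,-1)$, with successive heights $0,-1,-2,-1,-2$; this is \rn\ and ends at height $-2$. The last visit to height $-1$ occurs after the third step, so your $P'=(-1,-1,1)$, which ends at height $-1$ but attains height $-2$: it is not \rn, the inductive hypothesis does not apply to it, and in fact $P'$ admits no factorization $E_1DE_2$ at all, since any such product has minimum height exactly $-1$. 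The same misidentification undermines your uniqueness argument: in the true factorization of this $P$ (namely $E_1$ and $E_2$ empty, $E_3=(1,-1)$) the final $D$ is the \emph{second} step of $P$, not the step leaving the last visit to height $-1$.

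The fix is small. If you want to peel $E_{n+1}$ off the end, cut at the \emph{first} arrival at height $-n$: since steps are at least $-1$, that arrival is via a down step from $-(n-1)$; the prefix before that down step has not yet reached $-n$, so it genuinely is \rn\ ending at height $-(n-1)$, and the suffix after the arrival stays at height at least $-n$ and ends there, hence is an excursion. The paper does the mirror image, peeling $E_1$ off the front by cutting at the first down step from height $0$ to height $-1$ and applying induction to the remaining suffix. Your other ingredients---the base case, the ``no downward jump past a level'' observation, and the remark that empty excursions must be allowed---are all fine.
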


\begin{proof} 
We proceed by induction on $n$. If $n=0$ then $P$ is an excursion. If $n>0$ then $P$ must have a first down step from height 0 to height $-1$. (This is where we use the assumption that there are no steps less than $-1$.) Cutting $P$ before and after this down step shows that $P=E_1 D Q$ where $Q$ is a \rn\ path that, when translated to start at the origin, ends at height $-(n-1)$. We then apply induction to $Q$.
We leave it to the reader to verify that this factorization is unique.
\end{proof}

See Figure \ref{f-l-1} for an example of the factorization of Lemma \ref{l-1} with $n=2$.

In Lemma \ref{l-1}, and similarly in other factorization statements, it is  understood that the converse is also true, in the sense that if $P=E_1 D E_2 D \cdots DE_{n+1}$ where each $E_i$ is an excursion, then $P$ is a \rn\ path that ends at height $-n$.

\begin{figure}[h]
\begin{tikzpicture}[scale=.6]
\centering
\draw[help lines] (0,0) grid (13,4);
\draw[thick] (0,2) -- (1,4) -- (2,3) -- (3,2);
\draw[thick, dashed] (3,2) -- (4,1);
\draw[thick]
  (4,1) -- (5,2) -- (6,2) -- (7,1);
  
\draw[thick, dashed] (7,1) -- (8,0);
\draw[thick] (8,0) -- (9,2) -- (10,1) -- (11,0)
  -- (12,1) -- (13,0);
\end{tikzpicture}
\caption{The factorization of Lemma \ref{l-1}.}
\label{f-l-1}
\end{figure}
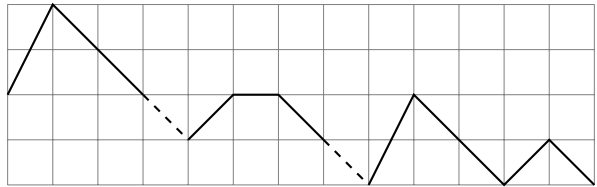

We have a variant of Lemma \ref{l-1} that will also be needed later.

\begin{lem}
\label{l-2}
Let $P$ be a \rp\ path that ends at height $-n$.
Then $P$ can be factored uniquely as $E_1 D E_2 D \cdots E_n D$ where each $E_i$ is an excursion and $D$ is the down step $-1$. \textup(If $n=0$ this means that $P$ is the empty path.\textup)
\end{lem}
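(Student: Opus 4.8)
The plan is to run the same argument as in the proof of Lemma~\ref{l-1}, now carrying the stronger reverse-positivity condition through an induction on $n$. The base case $n=0$ is immediate: if $P$ were nonempty, its starting vertex would be a vertex distinct from the endpoint and at the same height $0$ as the endpoint, contradicting the defining property of a \rp\ path; hence $P$ is the empty path, matching the empty product.

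For the inductive step, suppose $n\ge1$. A \rp\ path is in particular \rn, and $P$ starts at height $0$ and ends at height $-n<0$, so the step set $\{-1,0,1,\dots\}$ forces $P$ to attain height $-1$, and the first time it does so is via a down step from height $0$ to height $-1$. Cutting $P$ immediately before and after this step gives $P=E_1DQ$, where the initial segment $E_1$ stays at height $\ge0$ (the path has not yet dipped below the $x$-axis) and ends at height $0$, so $E_1$ is an excursion, and the terminal segment $Q$, translated to start at the origin, ends at height $-(n-1)$. The key point is that $Q$ is again a \rp\ path: its endpoint coincides with the endpoint of $P$, which by hypothesis is strictly lower than every other vertex of $P$ and hence strictly lower than every other vertex of $Q$. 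Applying the induction hypothesis to $Q$ yields $Q=E_2DE_3D\cdots E_nD$, so $P=E_1DE_2D\cdots E_nD$. Uniqueness follows exactly as in Lemma~\ref{l-1}: the first down step from height $0$ to height $-1$ is determined by $P$, so $E_1$, the following $D$, and $Q$ are determined, and the factorization of $Q$ is unique by induction. The converse is routine, as remarked after Lemma~\ref{l-1}: a product $E_1DE_2D\cdots E_nD$ of excursions and down steps is \rn\ and ends at height $-n$, and its prefix $E_1DE_2D\cdots E_n$ never goes below height $-(n-1)$, so the endpoint is strictly lower than every other vertex, making $P$ \rp.

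I do not anticipate a real obstacle; the one place that needs a moment's care is checking that the tail $Q$ inherits reverse-positivity, since in Lemma~\ref{l-1} one only needed $Q$ to be \rn\ (which is automatic), whereas here the strict-minimum-at-the-endpoint condition must be tracked. It transfers trivially because $Q$ shares its endpoint with $P$ and its vertex set is a subset of that of $P$. An equally short alternative is to deduce the statement directly from Lemma~\ref{l-1}: write $P=E_1DE_2D\cdots DE_{n+1}$ by that lemma, observe that if the final excursion $E_{n+1}$ were nonempty it would contribute a vertex at height $-n$ distinct from the endpoint of $P$, contradicting reverse-positivity, so $E_{n+1}$ is empty and $P=E_1DE_2D\cdots E_nD$, with both existence and uniqueness inherited from Lemma~\ref{l-1}.
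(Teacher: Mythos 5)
Your proof is correct, and the ``equally short alternative'' you sketch at the end --- apply Lemma \ref{l-1} and observe that reverse-positivity forces $E_{n+1}$ to be empty --- is exactly the argument the paper gives. Your primary argument (rerunning the induction of Lemma \ref{l-1} while tracking that the tail $Q$ inherits reverse-positivity) is a sound but slightly longer variant of the same idea.
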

\begin{proof}
The case $n=0$ is clear, so suppose that $n>0$.
Since $P$ satisfies the hypothesis of Lemma \ref{l-1}, it can be factored uniquely as $E_1 D E_2 D \cdots DE_{n+1}$ as in
Lemma \ref{l-1}. But since  $P$ stays above  height $-n$ until the end, it must end with a down step, so $E_{n+1}$ is empty.
\end{proof}

\subsection{Generating functions}
Let us assign a weight to every step, and assign to each path the product of the weights of its steps. Then the \emph{generating function} (also called \emph{generating series} or \emph{counting series}) for a set of paths is the sum of its weights, where we assume that the sum exists as a formal power series.  It is clear that if $A$ and $B$ are disjoint sets of paths then the generating function for $A\cup B$ is the sum of the generating functions of $A$ and $B$, and if every element of $AB$ has a unique factorization $\alpha\beta$ with $\alpha\in A$ and $\beta\in B$ then the generating function for $AB$ is the product of the generating functions of $A$ and $B$.

For our applications, we take the weight of each up step $n\ge0$ to be $t_{n+1}$ and we take the weight of the  down step $-1$ to be 1. 
(We weight the up step $n$ by $t_{n+1}$  rather than $t_n$ to agree with the notation of Wildberger and Rubine \cite{wr}; this also makes the defining equation \eqref{e-S0} look nicer.)

\begin{thm}
\label{t-S}
The series $S$ is the generating function for excursions.
\end{thm}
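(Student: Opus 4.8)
The plan is to show directly that the generating function $F$ for excursions satisfies the defining equation \eqref{e-S0}, and then conclude $F=S$ by the uniqueness of its formal power series solution. First I would check that $F$ is a well-defined formal power series: if the weight of an excursion has total degree $m$ in the $t_i$, then the excursion has at most $m$ up steps, each of value at most $m$, and exactly as many down steps as the sum of the values of its up steps, which is at most $m$; so only finitely many excursions contribute to each degree.

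Next comes the decomposition. Let $E$ be an excursion. If $E$ is empty it contributes the term $1$. Otherwise $E$ begins with a step, which cannot be the down step $D$ (that would send the path to height $-1$), so it is an up step of value $n-1$ for some $n\ge1$; this step has weight $t_n$ and carries the path from height $0$ to height $n-1$. Deleting it and translating to the origin yields a path $P$; since $E$ stays at height $\ge0$ and returns to $0$, the path $P$ stays at height $\ge-(n-1)$ and ends at height $-(n-1)$, so $P$ is a \rn\ path ending at height $-(n-1)$. By Lemma \ref{l-1} (applied with its parameter equal to $n-1$), $P$ factors uniquely as $E_1 D E_2 D\cdots D E_n$ with each $E_i$ an excursion, using $n$ excursions and $n-1$ down steps. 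Conversely, for any $n\ge1$ and any excursions $E_1,\dots,E_n$, the concatenation $U_{n-1}E_1 D E_2 D\cdots D E_n$ is an excursion. Hence every nonempty excursion is obtained exactly once this way, and since each $D$ has weight $1$, the excursions beginning with the step $U_{n-1}$ contribute $t_n F^n$ in total. Summing over the empty excursion and over $n\ge1$ gives $F = 1+\sum_{n=1}^\infty t_n F^n$, which is \eqref{e-S0}; by uniqueness of its solution, $F=S$.

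I expect no serious obstacle here: the only substantive step is recognizing that the portion of an excursion after its first up step is a \rn\ path ending at height $-(n-1)$, which is exactly the hypothesis of Lemma \ref{l-1}; everything else is bookkeeping, the one point to keep straight being that an up step of value $n-1$ carries weight $t_n$ rather than $t_{n-1}$.
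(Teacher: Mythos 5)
Your proof is correct and follows essentially the same route as the paper's: decompose a nonempty excursion by its first up step $U_{n-1}$, recognize the remainder as a \rn\ path ending at height $-(n-1)$, apply Lemma \ref{l-1}, and conclude from the uniqueness of the solution of \eqref{e-S0}. The added check that the generating function is a well-defined formal power series is a nice touch the paper leaves implicit.
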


\begin{proof}
Let $\St$ be the generating function for excursions. A path counting by $\St$ is either empty, with weight 1, or starts with an up step. Suppose that such a path $P$ starts with the up step $n-1$.  Then by Lemma \ref{l-1}, $P$ may be factored uniquely as  $U_{n-1}E_1DE_2D\cdots E_n$ 
where each $E_i$ is an excursion.
Thus the sum of the weights of all excursions starting with the up step $n-1$ is $t_n \St^n$, so summing on $n$ gives
$\St= 1+\sum_{n=1}^\infty t_n {\St}^n$. Then by \eqref{e-S0} and the uniqueness of the solution of this equation,  we have $\St=S$.
\end{proof}

\begin{thm} 
\label{t-Gthm}
The Geode  $G$ is the generating function for  nonnegative paths.
\end{thm}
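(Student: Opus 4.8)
The plan is to prove this by showing that $\Gt$, the generating function for nonnegative paths, satisfies the very same equation \eqref{e-G1} that characterizes $G$, and then concluding $\Gt=G$ by uniqueness. Writing $\Phi=\sum_{n=1}^{\infty}t_n(1+S+\cdots+S^{n-1})$, equation \eqref{e-G1} reads $G=1+G\Phi$, and since $\Phi$ has no constant term this has a unique formal power series solution, namely $G=(1-\Phi)^{-1}$ as in \eqref{e-G2}. So it is enough to derive $\Gt=1+\Gt\Phi$. (One should first note that $\Gt$ is a well-defined formal power series: a nonnegative path whose weight is a monomial of degree $d$ in the $t_i$ has exactly $d$ up steps, whose heights have some fixed sum $\rho$, hence at most $\rho$ down steps — otherwise it would end below the $x$-axis — so only finitely many paths contribute to each coefficient.)

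The key step is a decomposition of a nonempty nonnegative path $P$, in the spirit of the proof of Theorem \ref{t-S} but with one extra layer (as the excerpt warns, nonnegative paths are ``more complicated''). Since $P$ never goes below the $x$-axis, its first step is an up step $U_n$ for some $n\ge0$, of weight $t_{n+1}$; write $P=U_nQ$, where $Q$ runs from height $n$ and stays at height $\ge0$. Let $-b$ be the minimum height of $Q$ minus $n$, so $0\le b\le n$ (because $Q$ starts at height $n$ and stays $\ge0$). Cut $Q$ just after the \emph{first} vertex at which it attains its minimum, writing $Q=RQ'$. Regarded as a path from the origin, $R$ is a \rp\ path ending at height $-b$ (its last vertex is the earliest, hence a strict, minimum among its own vertices), and $Q'$, likewise translated to start at the origin, is a nonnegative path (all of its vertices lie at or above the minimum of $Q$, where $Q'$ begins). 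By Lemma \ref{l-2} together with Theorem \ref{t-S}, the generating function for \rp\ paths ending at height $-b$ is $S^b$. Conversely, for any $n\ge0$, any \rp\ path $R$ ending at some height $-b$ with $0\le b\le n$, and any nonnegative path $Q'$, the concatenation $U_nRQ'$ is a nonnegative path from which the triple $(n,R,Q')$ is recovered by exactly this rule; so the decomposition is a bijection, and summing the weights $t_{n+1}\cdot S^b\cdot\Gt$ over $0\le b\le n$ and then over $n\ge0$ gives $\Gt=1+\Gt\sum_{n=0}^{\infty}t_{n+1}(1+S+\cdots+S^n)=1+\Gt\,\Phi$, which is \eqref{e-G1} for $\Gt$ after re-indexing the sum. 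Hence $\Gt=G$. (Equivalently, iterating exhibits a nonnegative path as a unique concatenation of ``blocks'' $U_{n-1}R$ with $R$ a \rp\ path ending at height $-b$, $0\le b\le n-1$, so $\Gt=\sum_{k\ge0}\Phi^k=(1-\Phi)^{-1}$.)

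The step needing the most care is verifying that $Q=RQ'$ really gives a bijection, and in particular the converse direction: one must check that applying the rule to $U_nRQ'$ peels off exactly $U_n$ and then exactly $R$, i.e.\ that the minimum of $RQ'$ is attained first at the end of $R$ (this holds because $Q'$, being nonnegative, never drops below its starting height), and that $U_nRQ'$ is genuinely a nonnegative path — and this last point is precisely where $0\le b\le n$ is used essentially, since it is equivalent to the endpoint of $R$ sitting at height $\ge0$ above the $x$-axis. Everything else — the weight bookkeeping and matching the resulting functional equation with \eqref{e-G1} — is routine.
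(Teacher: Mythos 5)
Your proof is correct, but it takes a genuinely different route from the paper's. The paper proves Theorem \ref{t-Gthm} with a two‑line bijection aimed directly at the identity $S-1=\Gt S_1$: a nonempty excursion has a last up step, and deleting that step together with the down steps after it leaves an arbitrary nonnegative path; since down steps have weight $1$, this gives $S-1=\Gt S_1$, which is exactly the defining relation of $G$. You instead peel off the \emph{first} prime block of a nonnegative path --- an up step $U_n$ followed by a \rp\ path ending at height $-b$ with $0\le b\le n$ --- to obtain $\Gt=1+\Phi\,\Gt$ with $\Phi=\sum_{n\ge1} t_n(1+S+\cdots+S^{n-1})$, and then identify this with \eqref{e-G1}. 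Your decomposition is precisely the one the paper uses later, in the free‑monoid discussion of Section \ref{s-paths}, to interpret \eqref{e-G2} combinatorially; in effect you prove \eqref{e-G2} combinatorially first and deduce Theorem \ref{t-Gthm} from it, whereas the paper goes the other way, deducing the combinatorial meaning of \eqref{e-G2} from the already‑proved Theorem \ref{t-Gthm}. The trade‑off is that the paper's argument is shorter and needs nothing beyond the definition $G=(S-1)/S_1$, while yours is longer but establishes the refined prime decomposition of nonnegative paths along the way; your supporting checks --- the well‑definedness of $\Gt$ as a formal power series, the recoverability of $(n,R,Q')$ from $U_nRQ'$, and the essential use of $0\le b\le n$ --- are all correct and are exactly the points that need care.
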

\begin{proof}
Let $\Gt$ be the generating function for nonnegative paths. We will show that $S-1=\Gt S_1$, which implies that $\Gt=G$.

Let $E$ be a nonempty excursion. Then $E$ contains a last up step (recall that~$0$ is an up step).
Removing this last up step and the following down steps leaves a nonnegative path. Conversely, we obtain all nonempty excursions 
uniquely by starting with a nonnegative path, appending an arbitrary up step, and then appending enough down steps to bring the path down to the $x$-axis. Since down steps have weight 1, we see that $S-1=\Gt S_1$.
\end{proof}

See Figure \ref{f-Gfac} for an example of the construction described in this proof.

\begin{figure}[h]
\begin{tikzpicture}
\centering
\draw[help lines] (0,0) grid (8,3);
\draw[thick] (0,0) -- (1,2) -- (2,1) -- (3,0) -- (4,1);
\draw[thick,dotted] (4,1) -- (5,3);
\draw[thick,dashed] (5,3) -- (6,2) -- (7,1) -- (8,0);
\fill (4,1) circle (2pt);
\fill (5,3) circle (2pt);
\end{tikzpicture}
\caption{An excursion obtained from a nonnegative path.}
\label{f-Gfac}
\end{figure}
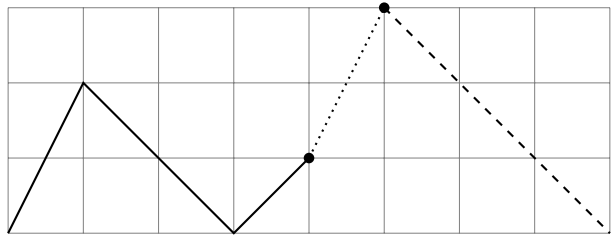

Note that the proof of Theorem \ref{t-Gthm} depends critically on down steps having weight~1; there does not seem to be such a simple formula for counting nonnegative paths where we keep track of the endpoint.

\begin{thm} 
The  series $H$ is the generating function for positive paths.
\end{thm}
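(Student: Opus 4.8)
The plan is to imitate the proof of Theorem~\ref{t-Gthm}, but to factor a nonnegative path at its \emph{last} return to the $x$-axis instead of peeling off its final up step. Write $\Ht$ for the generating function for positive paths; since $H=G/S$ by definition, it is enough to prove $G=S\Ht$, which by Theorems~\ref{t-S} and~\ref{t-Gthm} amounts to showing that the generating function for nonnegative paths is the product of the generating functions for excursions and for positive paths. Equivalently, one could read off the same identity from~\eqref{e-SH} or~\eqref{e-H2}, but the last-return factorization seems the most transparent.

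The key step is the factorization: every nonnegative path $P$ has a unique factorization $P=EQ$ with $E$ an excursion and $Q$ a positive path. Given $P=(s_1,\dots,s_k)$ with vertex heights $0=h_0,h_1,\dots,h_k$ (all $\ge0$ because $P$ is nonnegative), let $j$ be the largest index with $h_j=0$; this exists since $h_0=0$. Then $E=(s_1,\dots,s_j)$ ends at height $0$ and never drops below it, so it is an excursion (empty when $j=0$, i.e.\ when $P$ is already positive), while the partial sums of $Q=(s_{j+1},\dots,s_k)$ are $h_{j+1},\dots,h_k$, all strictly positive by the maximality of $j$, so $Q$ is a positive path (empty when $j=k$, i.e.\ when $P$ is an excursion). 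Conversely, for any excursion $E$ and positive path $Q$ the product $EQ$ is nonnegative, and in $EQ$ the endpoint of $E$ is the last vertex lying on the $x$-axis, so $E$ and $Q$ are recovered from $EQ$; hence the factorization is unique.

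Once this is established, the product rule for generating functions stated above gives $\Gt=\St\,\Ht$, which by Theorems~\ref{t-S} and~\ref{t-Gthm} is $G=S\Ht$, and therefore $\Ht=G/S=H$. I do not expect a real obstacle here: the only things to watch are the two degenerate cases ($E$ empty, $Q$ empty) and the verification that the converse direction genuinely reconstructs the factorization, so that the product rule applies.
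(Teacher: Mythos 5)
Your proof is correct and is essentially the paper's own argument: both factor a nonnegative path uniquely as a (possibly empty) excursion followed by a (possibly empty) positive path by cutting at the last return to the $x$-axis, yielding $G=S\Ht$ and hence $\Ht=H$. You simply spell out the details of the factorization that the paper leaves implicit.
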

\begin{proof}
Let $\Ht$  be the generating function for positive paths.
Every nonnegative path may be factored uniquely as a possibly empty excursion followed by a possibly empty positive path. (See Figure \ref{f-label2} for an example.)
Thus $G=S\Ht$ so $\Ht = G/S=H$.
\end{proof}

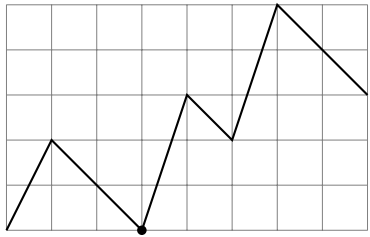
\begin{figure}[h]
\begin{tikzpicture}[scale=.6]
\centering
\draw[help lines] (0,0) grid (8,5);
\draw[thick] (0,0) -- (1,2) -- (2,1) -- (3,0) -- (4,3) -- (5,2) -- (6,5) -- (7,4) -- (8,3);
\fill (3,0) circle (3pt);
\end{tikzpicture}
\caption{Factorization of a nonnegative path as an excursion followed by a positive path.}
\label{f-label2}
\end{figure}

\subsection{Free monoids}
To give combinatorial interpretations to the formulas of Theorem \ref{t-1}, it is helpful to use the concept of free monoids.
A \emph{monoid} is a set with an associative binary operation and an identity element. 
We call an element $x$ of a monoid \emph{irreducible} if it is not the identity and cannot be factored as $yz$ where neither $y$ nor $z$ is the identity. If every element of a monoid $M$ has a unique factorization into irreducibles then $M$ is called a \emph{free monoid}. If $M$ is a free monoid, we will call its irreducible elements \emph{primes}. It is clear that the set of all paths (with any fixed set of steps), with the operation of concatenation, is a free monoid, in which the primes are the steps. As a more interesting example, the set of excursions is a free monoid, in which the primes are nonempty excursions that return to the $x$-axis for the first time at the end. These are  called \emph{arches}.

Free monoids are useful in working with generating functions for the following reason. Let $M$ be a free monoid. Suppose that a weight function is defined on $M$ so that the weight of an element of $M$ is the product of the weights of its constituent primes. Then if $U$ is the sum of the weights of all elements of $M$ and $V$ is the sum of the weights of all the primes of $M$, then
\begin{equation}
\label{e-fm1}
U = \sum_{n=0}^\infty V^n = \frac{1}{1-V},
\end{equation}
assuming that the sum converges as a formal power series. 
All of the formulas of Theorem \ref{t-1} will be seen to be instances of \eqref{e-fm1}.

To prove one of these formulas we must first identify the set of paths counted by the left side, prove that it is a free monoid (in these cases it will obviously be a monoid),  determine the primes, and find their generating function.

Although the proofs of freeness of the monoids we consider are simple enough that we could use ad hoc arguments, there is a helpful lemma that makes the proofs easier.

Let us say that a submonoid $M$ of a free monoid $F$ satisfies \emph{Sch\"utzenberger's criterion} if it has the property that for every $p$, $q$, and $r$ in $F$, if $p$, $pq$, $qr$, and $r$ are in $M$ then $q$ is in $M$. (In our applications $F$ is the free monoid of all paths.)

Then we have the following lemma due to Sch\"utzenberger \cite[Theorem 1.4]{schutzenberger}; see also Gessel and Li \cite[Lemma 3]{gl} for a conveniently accessible proof.  (We will  use only the sufficiency of Sch\"utzenberger's criterion.)
\begin{lem}
\label{l-schutz}
Let $M$ be a submonoid of a free monoid.
Then $M$ is free if and only if $M$ satisfies Sch\"utzenberger's criterion.
\end{lem}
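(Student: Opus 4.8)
The plan is to prove both implications, with the sufficiency of Schützenberger's criterion (the criterion implies that $M$ is free) being the substantive direction, since that is the one used in the sequel. \emph{Necessity.} Assume $M$ is free and that $p,pq,qr,r\in M$; we must show $q\in M$. The key observation is that $pqr=p\cdot(qr)=(pq)\cdot r$ lies in $M$, hence has a unique factorization into irreducibles of $M$. Concatenating the irreducible factorizations of $p$ and of $qr$ produces one such factorization of $pqr$, and concatenating those of $pq$ and of $r$ produces another; uniqueness forces these two sequences of $M$-irreducibles to coincide. Since $|p|\le|pq|$, where $|\cdot|$ denotes length in the ambient free monoid $F$, the irreducibles coming from $p$ form an initial segment of those coming from $pq$, and left-cancellation in $F$ then identifies $q$ with the product of the intervening $M$-irreducibles, so $q\in M$.

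\emph{Sufficiency.} Existence of a factorization into $M$-irreducibles is immediate by induction on length in $F$: a non-identity element of $M$ is either irreducible or a product of two strictly shorter non-identity elements of $M$. For uniqueness, suppose it fails, and among all elements of $M$ with two distinct factorizations into $M$-irreducibles choose one, $x=p_1\cdots p_k=q_1\cdots q_l$, with $|x|$ minimal. Since $F$ is free, $p_1$ and $q_1$ are both prefixes of $x$; interchanging the two factorizations if necessary, assume $|p_1|\le|q_1|$. If $|p_1|=|q_1|$ then $p_1=q_1$, and cancelling yields a strictly shorter counterexample, a contradiction; the cases $k=1$ or $l=1$ likewise force $p_1=q_1$ and are excluded. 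So $|p_1|<|q_1|$ and $k,l\ge2$; write $q_1=p_1r$ with $r\in F$, $r\neq1$, and cancel $p_1$ from $p_1\cdots p_k=p_1r\,q_2\cdots q_l$ to get $p_2\cdots p_k=r\,q_2\cdots q_l$.

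Now apply Schützenberger's criterion in $F$ to the triple $(p_1,\ r,\ q_2\cdots q_l)$: the four elements $p_1$, $\ p_1r=q_1$, $\ r\,(q_2\cdots q_l)=p_2\cdots p_k$, and $q_2\cdots q_l$ all lie in $M$, so the criterion gives $r\in M$. Since $p_1\neq1$ and $r\neq1$, this writes the $M$-irreducible $q_1$ as a product of two non-identity elements of $M$, a contradiction; hence factorizations are unique and $M$ is free. The only real subtlety I anticipate is hitting on the correct instantiation of the criterion — the triple $(p_1,r,q_2\cdots q_l)$ — and carrying out the small amount of bookkeeping (ruling out $k=1$, $l=1$, and $|p_1|=|q_1|$ via minimality of the counterexample) needed to set it up; the rest is routine use of cancellativity and unique factorization in the free monoid $F$.
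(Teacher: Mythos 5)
Your proof is correct. Note that the paper does not actually prove this lemma---it cites Sch\"utzenberger and the Gessel--Li paper for a proof---so there is nothing internal to compare against; your argument (necessity via unique factorization of $pqr$ computed two ways, sufficiency via a minimal-length counterexample in which the criterion applied to the triple $(p_1, r, q_2\cdots q_l)$ forces the irreducible $q_1=p_1r$ to factor nontrivially in $M$) is essentially the standard one found in those references, and all the auxiliary facts you invoke (cancellativity, equidivisibility, and well-defined length in the ambient free monoid $F$) are legitimate. The only cosmetic quibble is the sentence claiming the cases $k=1$ or $l=1$ ``force $p_1=q_1$'': more precisely, $k=1$ with $l\ge 2$ directly contradicts the irreducibility of $p_1=x$, which then forces $l=1$ and hence $p_1=q_1$; but this is a matter of phrasing, not a gap.
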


It is easy to see directly that excursions form a free monoid, in which the primes are arches, but we can also use Sch\"utzenberger's criterion: if $p$ and $pq$ are excursions then $q$ is an excursion, so \Sch's criterion is satisfied for excursions.

We want to show that the set of nonnegative paths and the set of positive paths also form free monoids. If $M$ is either of these sets then $M$ is clearly a monoid and  if $qr\in M$ then $q\in M$, so  $M$ satisfies \Sch's criterion.

Similarly, the set of reverse-nonnegative paths and the set of reverse-positive paths are free monoids. Lemma \ref{l-2} shows that the prime reverse-positive paths are the paths of the form $ED$, where $D$ is an excursion. On the other hand, Lemma \ref{l-1} does not describe the prime reverse-nonnegative paths; we leave it to the curious reader to determine what they are. 

We are now ready to explain the combinatorial interpretations of the formulas of Theorem \ref{t-1}.
For \eqref{e-S1} we need to show that the generating function for arches is $\sum_{n=0}^\infty t_{n+1} S^{n}$. The proof is   similar to the proof of Theorem \ref{t-S}. An arch must start with an up step $n$ for some $n\ge0$, followed by a reverse-positive path ending at height $-n$. Then by Lemma \ref{l-2}, it can be factored as $U_n E_1 D E_2 D \cdots E_n D$. This gives the generating function for arches as
$\sum_{n=0}^{\infty} t_{n+1}S^{n}$.

For \eqref{e-SH} we need to show that the generating function for arches may also be expressed as $H S_1$. The proof is very similar to that of Theorem \ref{t-Gthm}: every arch can be obtained uniquely by starting with a positive path,  appending an arbitrary up step, and then appending enough down steps to bring the path down to the $x$-axis.


For \eqref{e-G2} we need to describe a decomposition for prime nonnegative paths. Let $P$ be a prime nonnegative path and suppose that $P$ starts with the up step $n$ so that $P$ may be factored as $U_nQ$ for some path $Q$. 
Then $P$ ends at height less than or equal to $n$ since if not, the factors $U_n$ and $Q$ would both be nonnegative and nonempty so $P$ would not be a prime. Suppose that $P$ ends at height $j$, where $0\le j \le n$. Then there cannot be any points of $P$ 
at a height less than or equal to $j$ other than the starting and ending points, since otherwise cutting $P$ at such a point would factor it into two nonempty nonnegative paths. Thus $Q$ is a reverse-positive path, and we may apply Lemma \ref{l-2} to $Q$ to see that $Q$ can be factored uniquely as 
$E_1D E_2 D \cdots E_{n-j}D$ where each $E_i$ is an excursion. (If $j=n$ then $Q$ is the empty path.) Thus the generating function for prime nonnegative paths with these values of $n$ and $j$ is $t_{n+1} S^{n-j}$. Summing over $j$ from 0 to $n$  gives $t_{n+1}(1+S+S^2 +  \cdots + S^{n})$, so the generating function for prime nonnegative paths is 
$\sum_{n=0}^\infty t_{n+1}(1+S+S^2 + S^n)$.

%
%

For \eqref{e-H2} we describe a decomposition for prime positive paths. We omit  details, as this decomposition is similar to that for prime nonnegative paths. Every prime positive path starting with the up step $n$ ends at height $j$ for some $j$ 
with $1\le j\le n$ and can be factored as $U_nQ$, where $Q$ is reverse-nonnegative path. By Lemma \ref{l-1}, $Q$ can be factored as 
$E_1 D E_2 D \cdots D E_{n-j+1}$, where each $E_i$ is an excursion.  Thus the generating function for prime positive paths is 
$\sum_{n=1}^\infty t_{n+1}(S+S^2 +\cdots+ S^n)$.

See Figure \ref{f-primepos} for an example of the factorization of a prime positive path with $n=3$ and  $j=1$. Note that this path is not a prime nonnegative path.
\begin{figure}[h]
\begin{tikzpicture}
\centering
\draw[help lines] (0,0) grid (6,4);
\draw[thick, dotted] (0,0) -- (1,3);
\draw[thick] (1,3) -- (2,4) -- (3,3);
\draw[thick, dashed] (3,3) -- (4,2);
\draw[thick] (4,2) -- (5,3) -- (6,2);
\end{tikzpicture}
\caption{A prime positive path.}
\label{f-primepos}
\end{figure}
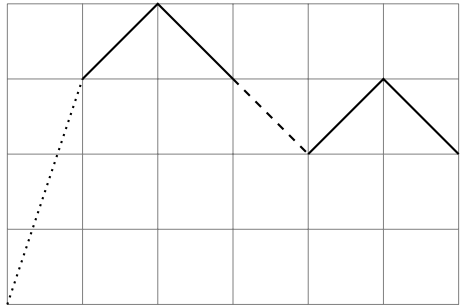

\section{Catalan, Motzkin, Riordan, and Schr\"oder numbers}
\label{s-cat}
The identities in Theorem \ref{t-1} and their combinatorial interpretations are of some interest even in the case in which 
$t_n=0$ for $n>2$. In this case we have the explicit formulas
\begin{gather}
S = \frac{1-t_1 -\sqrt{(1-t_1)^2 -4t_2}}{2t_2}\label{e-S12}\\[3pt]
H =\frac{1+t_1-\sqrt{(1-t_1)^2 -4t_2}}{2(t_1+t_2)}\label{e-H12}\\[3pt]
G=SH = \frac{1-t_1 -2t_2 -\sqrt{(1-t_1)^2 -4t_2}}{2t_2(t_1+t_2)}=(H-1)/t_2.
\end{gather}
The equation $G=(H-1)/t_2$ is easily explained combinatorially: A nonempty positive path with steps $-1$, $0$, and $1$ must consist of an up step $1$ followed by a nonnegative path. This equation also implies that the coefficients of $G$ are essentially the same as the coefficients of $H$, so we discuss only  the coefficients of $H$ below.

If we set $t_1=0$ and  $t_2=x$ then
\begin{equation*}
S=H = \frac{1-\sqrt{1-4x}}{2x},
\end{equation*}
the generating function for the Catalan numbers.

Our combinatorial interpretation for $S$ reduces to the well-known combinatorial interpretation of the Catalan numbers as counting Dyck paths. The combinatorial interpretation to the Catalan numbers given by Theorem \ref{t-Gthm} in this case is number~28 in Stanley's collection of Catalan number interpretations \cite{stanley}. Stanley attributes the result to Emeric Deutsch and gives essentially the same proof as our proof of Theorem \ref{t-Gthm} in this case.

If we set $t_1=x$ and $t_2=x^2$ in Equations \eqref{e-S12} and \eqref{e-H12} we get
\begin{equation*}
 S= \frac{1-x -\sqrt{1-2x-3x^2}}{2x^2},
\end{equation*}
the generating function for the Motzkin numbers, and
\begin{equation*}
H = \frac{1+x-\sqrt{1-2x-3x^2}}{2x(1+x)},
\end{equation*}
the generating function for Riordan numbers. 

If we set $t_1=x$ and $t_2=x$ then we have
\begin{equation*}
S = \frac{1-x-\sqrt{1-6x+x^2}}{2x},
\end{equation*}
the generating function for the large Schr\"oder numbers,
\[H= \frac{1+x-\sqrt{1-6x+x^2}}{4x},\]
the generating function for the small Schr\"oder numbers.

\section{Appendix: Another proof of Theorem \ref{t-Gthm}}
\label{s-app}

In this section we present another proof of Theorem \ref{t-Gthm}, which states that the Geode $G$ is the generating function for nonnegative paths. This proof is less straightforward than the one in Section \ref{s-paths} but is an interesting application of lattice path enumeration techniques. It proceeds by first using an indirect method to count nonnegative paths with a nontrivial weight on down steps, and then carefully setting the weight for down steps  to~$1$.

We start by recalling a well-known factorization for lattice paths, sometimes called the Wiener-Hopf factorization, which applies to paths with arbitrary steps.  We can factor a path into three parts (each possibly empty) by cutting the path at the first lowest point and at the last lowest point. 
The first part is a reverse-positive path, the second part is an excursion, and the third part is a positive path.
An example is shown in Figure \ref{f-wh}. This factorization is quite useful in lattice path enumeration. See \cite{analytic,bf,pathology,imgfact,cristobal} for some applications.
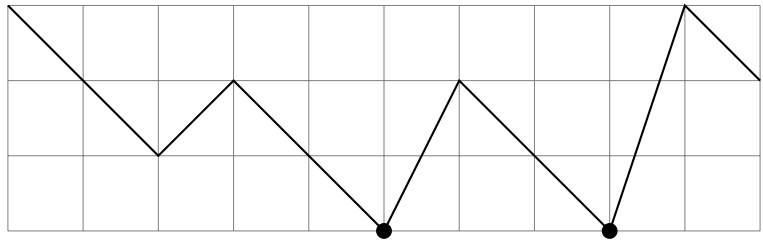
\begin{figure}[h]
\begin{tikzpicture}
\centering
\draw[help lines] (0,0) grid (10,3);
\draw[thick] (0,3) -- (1,2) -- (2,1) -- (3,2) -- (4,1) -- (5,0) -- (6,2) -- (7,1) -- (8,0) -- (9,3) -- (10,2);
\fill (5,0) circle (3pt);
\fill (8,0) circle (3pt);
\end{tikzpicture}
\caption{The Wiener-Hopf factorization of a path}
\label{f-wh}
\end{figure}

For our problem, we want to combine the second and third parts into a nonnegative path, so we have a factorization of every path into a \rp\ path followed by a nonnegative path. 

The set of all paths is easy to count, and Lemma \ref{l-2} allows us to count \rp\ paths. However, there is a complication. 
There is no problem in assigning down steps the weight 1 when counting nonnegative paths, since for any finite multiset of up steps, there are only finite many nonnegative paths using these up steps, no matter how many down steps they have. But for arbitrary paths, and for \rp\ paths, there are infinitely many paths with a given multiset of up steps. So to get a generating function identity from the factorization of arbitrary paths into \rp\ paths and nonnegative paths, we change our definition of the generating function for a set of paths: we keep the weight of $t_{i+1}$ for the up step $i\ge0$, but we change the weight of the down step $-1$  from 1 to a new variable $y$. Let $S(y)$ be the new generating function for excursions, so $S(1) = S$ by Theorem \ref{t-S}. Let $N(y)$ be the new generating function for nonnegative paths. Then to prove Theorem \ref{t-Gthm} we will show that $N(1) = G$.

By Lemma \ref{l-2} the new generating function for \rp\ paths is $\bigl(1-S(y)y\bigr)^{-1}$. The new generating function for all paths is $(1-y-t_1-t_2-\cdots)^{-1}=(1-y-S_1)^{-1}$. Then the Wiener-Hopf factorization  gives the identity
\begin{equation}
\label{e-N}
\frac{1}{1-y-S_1}=\frac{1}{1-S(y)y}N(y).
\end{equation}
We would like to determine $N(1)$ from \eqref{e-N}. We cannot set $y=1$ in \eqref{e-N}, since the result would not converge as a formal power series in $t_1, t_2,\dots$. 
But this problem is easy to fix. We replace \eqref{e-N} with the equivalent equation
\begin{equation*}
1-S(y)y = (1-y-S_1)N(y).
\end{equation*}
Now there is no problem in setting $y=1$ and we find that $1-S =-S_1 N(1)$, so $N(1)= (S-1)/S_1=G$, as desired.

\bigskip
\textbf{Acknowledgment.} I would like to thank Dean Rubine and Martin Rubey for pointing out typos in earlier versions of this paper.

\bibliography{geode}{}
\bibliographystyle{amsplain}

\end{document}